\newtheorem{Thm}{Theorem}
\newtheorem{Prop}[Thm]{Proposition}
\newtheorem*{Prop*}{Proposition}
\newtheorem{Cor}[Thm]{Corollary}
\newtheorem{Lem}[Thm]{Lemma}
\theoremstyle{remark}
\newtheorem*{Remarks}{Remarks}
\newtheorem*{Remark}{Remark}
\theoremstyle{definition}
\newtheorem{Def}[Thm]{Definition}
\numberwithin{equation}{section}
\numberwithin{Thm}{section}
\newcommand{\R}{\mathbb R}
\newcommand{\N}{\mathbb N}
\newcommand{\C}{\mathbb C}
\newcommand{\ft}{\tilde{f}}
\newcommand{\St}{\tilde{S}}
\newcommand{\Ft}{\tilde{F}}
\newcommand{\alphat}{\tilde{\alpha}}
\newcommand{\Ct}{\tilde{C}}
\newcommand{\Conet}{\tilde{C_1}}
\newcommand{\Fkp}{F_{k,p}}
\renewcommand{\Re}{\operatorname{Re}}
\renewcommand{\Im}{\operatorname{Im}}
\begin{document}


\title{H\"older continuity and dimensions of fractal Fourier series}

\author{Efstathios-K. Chrontsios-Garitsis}
\author{A.J. Hildebrand}  
\address{Department of Mathematics\\
University of Illinois\\
Urbana, IL 61801\\
USA}
\email[E.-K. Chrontsios-Garitis]{ekc3@illinois.edu,echronts@gmail.com}
\email[A.J. Hildebrand]{ajh@illinois.edu}

\date{\today}



\maketitle


\begin{abstract}
Motivated by applications in number theory, analysis, and fractal geometry, 
we consider regularity properties and dimensions of graphs associated 
with Fourier series of the form $F(t)=\sum_{n=1}^\infty f(n)e^{2\pi i
nt}/n$, for general coefficient functions $f$.
Our main result states that if, for some
constants $C$ and $\alpha$ with $0<\alpha<1$, we have  
$|\sum_{1\le n\le x}f(n)e^{2\pi i nt}|\le C x^{\alpha}$ uniformly
in $x\ge 1$ and $t\in\R$, then the series $F(t)$ 
is H\"older continuous with exponent $1-\alpha$, and the graph of
$|F(t)|$ on the interval $[0,1]$ has box-counting dimension $\le
1+\alpha$. 
As applications we recover the best-possible uniform H\"older exponents 
for the Weierstrass functions $\sum_{k=1}^\infty a^k\cos(2\pi b^k t)$
and the Riemann function $\sum_{n=1}^\infty \sin(\pi n^2 t)/n^2$.
Moreoever, under the assumption of the Generalized Riemann Hypothesis, 
we obtain nontrivial bounds for H\"older exponents 
and dimensions associated with series of the form
$\sum_{n=1}^\infty \mu(n)e^{2\pi i n^kt}/n^k$, where $\mu$ is the
M\"obius function.
\end{abstract}

\section{Introduction}
\label{sec:introduction}

Given an arithmetic function $f:\N\to\C$, we consider the Fourier series 
\begin{equation}
\label{eq:F-def}
F(t)=F(f;t)=\sum_{n=1}^\infty \frac{f(n)e^{2\pi i nt}}{n}.
\end{equation}
We are interested in regularity properties of the function $F(t)$ and
in the dimensions of the following 
natural geometric objects associated with this
function: 
\begin{itemize}
\item 
The image $F([0,1])$ of the interval $[0,1]$ under $F$, i.e., the path 
$F(t)$, $0\le t\le 1$, in the complex plane (identified with $\R^2$):
\begin{equation}
\label{eq:F01-def}
F([0,1])=F(f;[0,1])=\{(\Re F(t), \Im F(t))\in\R^2: t\in[0,1]\}.
\end{equation}
\item The graphs of the real-valued functions 
$|F(t)|$, $\Re F(t)$, and $\Im F(t)$, 
$0\le t\le 1$, which we denote by $G(F)$, $G_R(F)$, and $G_I(F)$, respectively;
i.e., the sets
\begin{align}
	\label{eq:GF-def}
	G(F)&=G(f;F)=\{ (t,|F(t)|)\in \R^2: t\in[0,1] \} ,
	\\
	\label{eq:GRF-def}
	G_R(F)&=G_R(f;F)=\{ (t,\Re F(t))\in \R^2: t\in[0,1] \} ,
	\\
	\label{eq:GIF-def}
	G_I(F)&=G_I(f;F)=\{ (t,\Im F(t))\in \R^2: t\in[0,1] \} .
\end{align}
\end{itemize}

Motivated by applications in both number theory and analysis,
we are interested in relating uniform bounds for the exponential sums
\begin{align}
	\label{eq:S-def}
	S(x,t)&=S(f;x,t)=\sum_{1 \leq n\leq x}f(n)e^{2\pi i nt}
\end{align}
to regularity conditions on the function $F(t)$ and 
dimensions of the associated geometric objects  defined above.
Our key result is the following theorem, which 
provides such a relation between bounds for $S(x,t)$ 
and the H\"older exponent of the associated Fourier series $F(t)$. 
(See Definition \ref{def:holder} below for the definitions of H\"older
continuity and H\"older exponent.)

\begin{Thm}\label{thm:main}
Assume that $S(x,t)$ satisfies, 
for some constants $C>0$ and $0<\alpha<1$,  
\begin{equation}
		\label{eq:S-bound}
		|S(x,t)|\le Cx^{\alpha}
		\quad (x\ge 1,\ t\in\R).
	\end{equation} 
Then we have
	\begin{equation}
		\label{eq:F-bound}
		|F(t+h)-F(t)|\le C_1h^{1-\alpha}
		\quad (t\in\R,\ h>0),
	\end{equation}
where 
\begin{equation}
\label{eq:C1-formula}
C_1=C_1(C,\alpha)=\frac{6\pi C}{\alpha(1-\alpha)}.
\end{equation}
Moreover, the same bound holds for the functions
$|F(t)|$, $\Re F(t)$, and $\Im F(t)$.

In particular, the functions $F(t)$, $|F(t)|$, $\Re F(t)$, and $\Im
F(t)$ are H\"older continuous with exponent $1-\alpha$.
\end{Thm}

Using known results relating the H\"older exponent of a function to the
dimensions of its graph and image set  (see Proposition
\ref{prop:dimdist} below), this yields the following
corollary.  (See Definition \ref{def:box-dim} below for the definition
of box-counting dimension.)


\begin{Cor}\label{cor:main}
Under the assumptions of Theorem \ref{thm:main} we have:
\begin{itemize}
\item[(i)]
The upper
box-counting dimension of the image $F([0,1])$ of the interval
$[0,1]$ under $F$ satisfies
\begin{align}
		\label{eq:dimGF}
		\overline{\dim_B}(F([0,1]))&\leq\frac{1}{1-\alpha}.
\end{align}
\item[(ii)]
The upper box-counting dimension
of the graph $G(F)$ defined in \eqref{eq:GF-def}
satisfies
	\begin{align}
		\label{eq:dimF}
		\overline{\dim_B}(G(F))&\leq 1+\alpha.
\end{align}
	Moreover, the same bound holds
	for the upper box-counting dimensions of the graphs
	$G_R(F)$ and $G_I(F)$, defined in 
	\eqref{eq:GRF-def} and \eqref{eq:GIF-def}, respectively. 
\end{itemize}
\end{Cor}


A key feature of Theorem \ref{thm:main} is its generality: aside from
the exponential sum estimate \eqref{eq:S-bound}, there are no
restrictions on the  coefficients $f(n)$.  In particular, the function
$f$ need not satisfy any regularity or smoothness properties, nor does
it have to be bounded. This opens up the result to a wide range of
potential applications.   

Another feature of Theorem \ref{thm:main} is the explicit nature of the
constant $C_1(C,\alpha)=6\pi C/(\alpha(1-\alpha))$. We hope that
this may be useful in future applications.


The intuition behind the exponent $1-\alpha$ in \eqref{eq:F-bound} is as
follows: Assume, for simplicity, that the function $f$ is bounded by $1$. Then
we have the trivial bound 
$|S(x, t)| \le  \sum_{1\le n\le x} |f (n)|\le x$, 
so \eqref{eq:S-bound} represents a saving of a factor
$x^{1-\alpha}$ over this trivial bound. On the other hand, since $F(t)$,
as a continuous periodic function, is bounded, we have trivially
$|F(t+h)-F(t)|\le c = h^0$ for some constant $c$ and all $t$ and $h$.
The estimate \eqref{eq:F-bound} thus
represents a saving of the same power of $h$, namely $h^{1-\alpha}$,
over this trivial bound. This heuristic also suggests that the exponent
$1-\alpha$ in \eqref{eq:F-bound} is best-possible. In fact, in 
Section \ref{sec:applications} we present examples for which the
exponent $1-\alpha$ is indeed optimal (see Corollaries
\ref{cor:weierstrass} and \ref{cor:riemann}).


The proof of Theorem \ref{thm:main}, given in Section \ref{sec:proof},
is based on the summation by parts formula, a standard technique in
analytic number theory (see Lemma \ref{lem:abel} below). The theorem can be generalized in a variety of directions, using
essentially the same approach.  For example, one can replace the weights
$1/n$ in \eqref{eq:F-def} by weights of the form $1/n^p$ for some $p>0$,
or even by a more general sequence of weights $\{w_n\}$ belonging to
some $l_p$-space and satisfying appropriate regularity conditions. One
can also generalize the frequencies $n$ in the exponential terms
$e^{2\pi i nt}$ in \eqref{eq:S-def} and \eqref{eq:F-def} to more general
frequencies $\phi(n)$ subject to some growth conditions.   

Here we confine ourselves to proving a generalization 
to Fourier series of the form 
\begin{equation} 
\label{eq:Fkp-def0}
\Fkp(t)=\sum_{n=1}^\infty
\frac{f(n)e^{2\pi i n^k t}}{n^p},
\end{equation}
where $k$ is an arbitrary positive integer and $p$ 
an arbitrary positive real number. Let 
\begin{equation}
\label{eq:Sk-def}
S_k(x,t)=\sum_{1\le n\le x}f(n)e^{2\pi i n^kt}
\end{equation}
be the exponential sum associated with the series \eqref{eq:Fkp-def0}.

\begin{Thm}
\label{thm:main-generalized} 
Assume that for some positive constants $C$ and $\alpha$ satisfying
\begin{equation}
\label{eq:alpha-bound}
\max(0,p-k)<\alpha<p
\end{equation}
we have
\begin{equation}
\label{eq:Sk-bound}
|S_k(x,t)|\le Cx^{\alpha}\quad (x\ge 1,\ t\in\R).
\end{equation}
Then we have
\begin{equation}
	\label{eq:Fk-bound}
	|\Fkp(t+h)-\Fkp(t)|\le C_2 h^{(p-\alpha)/k}
	\quad (t\in\R,\ h>0),
\end{equation}
where $C_2$ is the constant $C_1$ of Theorem \ref{thm:main}, 
with $C$ and
$\alpha$ replaced by $\Ct=(1+|k-p|/(\alpha +k-p))C$ and $\alphat
=(\alpha+k-p)/k$, respectively, i.e., 
\begin{align}
\label{eq:C2-formula}
C_2&=C_2(k,p,C,\alpha)=
C_1\left(\left(1+\frac{|k-p|}{\alpha + k-p}\right)C,
\frac{\alpha+k-p}{k}\right)
\\
\notag
&=\frac{6\pi\left(1+\frac{|k-p|}{\alpha + k-p}\right)C}
{\frac{\alpha + k - p}{k}\cdot
\left(1-\frac{\alpha+k-p}{k}\right)}
=\frac{6\pi C \left(\alpha + k-p+|k-p|\right)k^2}
{\left(\alpha+k-p\right)^2(p-\alpha)}.
\end{align}
Moreover, the same conclusion holds for the functions
$|\Fkp(t)|$, $\Re \Fkp(t)$, and $\Im \Fkp(t)$. 

In particular, the functions $\Fkp(t)$, $|\Fkp(t)|$, $\Re \Fkp(t)$, and
$\Im \Fkp(t)$ are H\"older continuous with exponent  $(p-\alpha)/k$.
\end{Thm}


Note that, when $k=p=1$, the functions $\Fkp(t)$ and $S_k(t)$ reduce 
to the functions $F(t)$ and $S(t)$ defined in \eqref{eq:F-def} and
\eqref{eq:S-def}, respectively, the condition \eqref{eq:alpha-bound} on
$\alpha$ reduces to $0<\alpha<1$, and the bound $\le C_2
h^{(p-\alpha)/k}$ in \eqref{eq:Fk-bound} 
of Theorem \ref{thm:main-generalized}
reduces to the bound $\le C_1 h^{1-\alpha}$ in 
\eqref{eq:F-bound} of Theorem \ref{thm:main}. Thus, Theorem
\ref{thm:main-generalized} generalizes Theorem \ref{thm:main}.  
However, in proving Theorem \ref{thm:main-generalized}, we will make use
of Theorem \ref{thm:main}.

As in the case of Theorem \ref{thm:main}, Theorem
\ref{thm:main-generalized} implies bounds on the dimensions of the
graphs $G(\Fkp)$, $G_R(\Fkp)$ and $G_I(\Fkp)$ associated with $\Fkp(t)$,
which are defined in the same way the graphs $G(F)$,  $G_R(F)$, and
$G_I(F)$.

\begin{Cor}
\label{cor:main-generalized}
Under the assumptions of Theorem \ref{thm:main-generalized} we have:
\begin{itemize}
\item[(i)]
The upper
box-counting dimension of the image $\Fkp([0,1])$ of the interval
$[0,1]$ under $\Fkp$ satisfies
\begin{align}
                \label{eq:dimGFkp}
                \overline{\dim_B}(\Fkp([0,1]))&\leq\frac{k}{p-\alpha}.
\end{align}
\item[(ii)]
The upper box-counting dimension
of the graph $G(\Fkp)$ defined in \eqref{eq:GF-def}
satisfies
        \begin{align}
                \label{eq:dimFkp}
                \overline{\dim_B}(G(\Fkp))&\leq 2-\frac{p-\alpha}{k}.
\end{align}
Moreover, the same bound holds for the
upper box-counting dimensions of the graphs
$G_R(\Fkp)$ and $G_I(\Fkp)$, defined in \eqref{eq:GRF-def} and
\eqref{eq:GIF-def}, respectively. 

\end{itemize}
\end{Cor}



Series of the form \eqref{eq:F-def} and \eqref{eq:Fkp-def0} arise in a wide
variety of contexts in number theory, analysis, and fractal geometry,
and graphs associated with such series 
often exhibit interesting fractal properties. 
A typical example is the series 
\begin{equation}
\label{eq:Fmu-def0}
F(\mu;t)=\sum_{n=1}^\infty
\frac{\mu(n)e^{2\pi i nt}}{n},
\end{equation}
where $\mu(n)$ is the M\"obius function. 
The graphs $F(\mu;[0,1])$ and $G(\mu;F)$ associated with
the M\"obius series $F(\mu;t)$ are shown in Figure \ref{fig:moebius12}.
\begin{figure}[H]
        \begin{subfigure}{.5\textwidth}
                \centering
                \includegraphics[width=.9\linewidth]{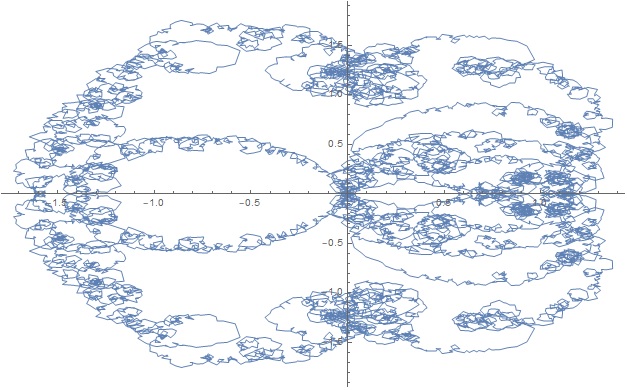}
        \end{subfigure}%
        \begin{subfigure}{.5\textwidth}
                \centering
                \includegraphics[width=.9\linewidth]{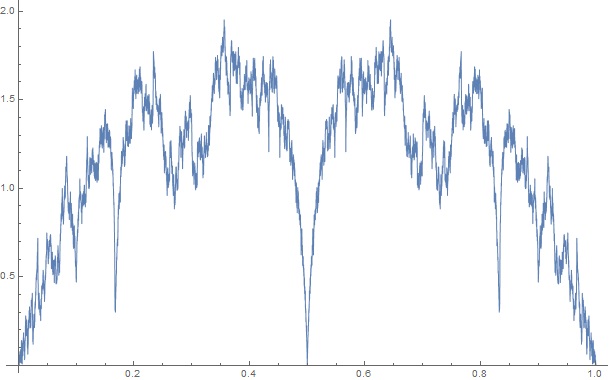}
        \end{subfigure}
        \caption{The path $F(\mu;[0,1])$ (left figure)
	and the graph $G(\mu;F)$ (right figure) associated   
	with the M\"obius series 
        $F(\mu;t)=\sum_{n=1}^\infty \mu(n)e^{2\pi i n t}/n$.}
        \label{fig:moebius12}
\end{figure}
Bohman and Fr\"oberg \cite{bohman-froberg1995} investigated the series
\eqref{eq:Fmu-def0} and some related series numerically, focusing on their  
fractal properties. In particular, these authors were the first to observe the
peculiar shape of the path $F(\mu;[0,1])$ shown on the left of Figure
\ref{fig:moebius12}. 

On the theoretical side, Bateman and Chowla \cite{bateman-chowla1963} proved
that the series $F(\mu;t)$ converges uniformly and thus represents a continuous
function of $t$.
More precise regularity properties for
the series $F(\mu;t)$ and similar series involving the M\"obius function
have been recently established by Veech \cite{veech2018} under the
assumption of the Generalized Riemann Hypothesis (GRH).
As applications of Theorems \ref{thm:main} and \ref{thm:main-generalized}
and their corollaries
we will obtain, under the same assumption,  
non-trivial bounds for H\"older exponents and dimensions of graphs
associated with the M\"obius Fourier series
$F(\mu;t)$  and its generalizations $F_{k,k}(\mu;t)$ defined in
\eqref{eq:Fkp-def0} (see Corollaries \ref{cor:Moebius} and
\ref{cor:Moebius2}).


Another class of functions to which our results
can be applied are the 
Riemann and Weierstrass functions, defined by 
\begin{equation}
	\label{eq:Riemann-def0}
\sum_{n=1}^\infty \frac{\sin(\pi n^2t)}{n^2}
\end{equation}
and 
\begin{equation}
\label{eq:Weierstrass-def0}
\sum_{k=1}^\infty a^k \cos(2\pi b^k t),
\end{equation}
respectively,
where $a$ and $b$ are real numbers satisfying $0<a<1$ and $b>1/a$.
These functions are classical examples of 
continuous functions that are almost everywhere non-differentiable, and they  
have been extensively studied in the
literature, both from a regularity point of view and from a
geometric/fractal point of view; see, for example,
\cite{baranski2014},
\cite{baranski2015},
\cite{duistermaat1991}
\cite{ecei2020},
\cite{hardy1916}, 
\cite{hunt1998}, 
\cite{jaffard1996},
\cite{kaplan1984},
\cite{shen2018}. For additional related work see \cite{JAextra1}, \cite{JAextra2}, \cite{JAextra3}, \cite{JAextra4}.

In Section \ref{sec:applications} we will see that the Weierstrass and
Riemann functions can be viewed as special cases of the Fourier series
\eqref{eq:F-def} and \eqref{eq:Fkp-def0}, respectively.  By applying
Theorems \ref{thm:main} and \ref{thm:main-generalized} 
we will obtain bounds on the
H\"older exponent that turn out to be the best-possible \emph{uniform}
bounds of this type; see Corollaries \ref{cor:weierstrass} and
\ref{cor:riemann}.


The Weierstrass and Riemann functions can be generalized in a variety of
ways.  For example, \cite{chamizo-cordoba1999},
\cite{chamizo-ubis2007}, \cite{chamizo-petrykiewicz-ruiz2017}
investigated functions of the form
$\sum_{n=1}^\infty e^{2\pi i n^k}/{n^p}$ for $k,p\geq2$,
which can be regarded as 
complex generalizations of the Riemann function, and more generally
functions of the form \eqref{eq:Fkp-def0} 
under certain regularity  
assumptions on the coefficients $f(n)$ (such as monotonicity or
growth constraints).  We note
that our results apply to a much broader class of coefficient sequences,
though the conclusions are not as precise as those obtained in the cited
references.



\subsection*{Outline of the paper.}
In the hope that the results of this paper will be of interest to
researchers in both analysis and number theory, we tried to keep the
exposition broadly accessible, recalling definitions and results that
may be only known to specialists in the relevant area.

In Section \ref{sec:background} we recall the definitions of H\"older
continuity, H\"older exponent,  and box-counting dimension, and we cite 
a key result relating the latter two quantities. We then use this
result to deduce the corollaries from Theorems \ref{thm:main} and
\ref{thm:main-generalized}.  Section \ref{sec:proof} contains the proof
of Theorem \ref{thm:main}, while Section \ref{sec:generalization}
contains the proof of  Theorem \ref{thm:main-generalized}.

In Section \ref{sec:applications} we present the applications of these
results mentioned above to Riemann and Weierstrass type functions and to
Fourier series associated with the M\"obius function.  In the final
section,  Section \ref{sec:conclusion}, we discuss possible extensions
and generalizations of our results and open problems suggested by these
results.


\section{Background on H\"older continuity and dimensions and proof of
the corollaries}
\label{sec:background} 

We begin by defining H\"older continuity, H\"older exponents, and pointwise
H\"older exponents. 

\begin{Def}[{\cite[p.~8]{falconer2014} and \cite{jaffard1996}}]
\label{def:holder}
Let $I\subseteq \R$ be a non-trivial compact interval and let $f:I\to \R^{d}$.
\nopagebreak
\begin{itemize}
\item[(i)] The function $f$ is called
\textbf{H\"older continuous} if there are constants $C>0$ and $\eta\in(0,1)$
such that 
\begin{equation}
\label{eq:Holder-def}
|f(x)-f(y)|\leq C|x-y|^\eta
\end{equation}
holds for all $x,y\in I$.  
We call $\eta$ a \textbf{H\"older exponent} of $f$.

\item[(ii)] Let $x_0\in I$. The function $f$ is called 
\textbf{locally H\"older continuous at $x_0$}
if there are constants $C>0$ and $\eta>0$ 
and a polynomial $P$ of degree less than $\eta$ such that 
\begin{equation}
\label{eq:local-Holder-def}
|f(x)-P(x-x_0)|\leq C|x-x_0|^\eta
\end{equation}
holds for all $x$ in some open neighborhood of $x_0$.  
The exponent $\eta$ 
is called a \textbf{local H\"older exponent} of $f$ at $x_0$.
The supremum of all $\eta$ for which \eqref{eq:local-Holder-def} holds is 
called the \textbf{pointwise H\"older exponent} of $f$ at $x_0$ 
and is denoted by $\eta(x_0)$.  
\end{itemize}
\end{Def}

We note that, if $0<\eta<1$, \eqref{eq:local-Holder-def} reduces to 
\begin{equation}
\label{eq:local-Holder-def2}
|f(x)-f(x_0)|\leq C|x-x_0|^\eta
\end{equation}
as the polynomial $P(x)$ must be of degree $0$ and hence be a constant. 


We next recall the definition of the \emph{box-counting dimension},
which is  one of several standard concepts of dimensions in fractal
geometry.

\begin{Def}[{\cite[p.~6]{fraser2021}}]
        \label{def:box-dim}
	Let $E$ be a bounded subset of $\R^d$. For $r>0$, denote by
	$N(E,r)$ the minimal number of sets of diameter at most $r$
	needed to cover $E$. The \textbf{upper box-counting dimension}
	of $E$ is defined as
      \begin{equation}
      \overline{\dim_B} E= \limsup_{r\to 0} \frac{\log
      N(E,r)}{\log(1/r)}.
      \end{equation} 
      Similarly, the \textbf{lower
      box-counting dimension} of $E$ is defined as 
      \begin{equation}
      \underline{\dim_B} E = \liminf_{r\to 0} \frac{\log
      N(E,r)}{\log(1/r)}. 
      \end{equation}
      When the upper and lower
      box-counting dimensions coincide, we call the common value the
      \textbf{box-counting dimension} of $E$ and denote it by $\dim_B
      E$. 
\end{Def}

The following proposition relates the H\"older exponent of a function to
the (upper) box-counting dimension of sets associated with this function. 

\begin{Prop}
[{\cite[p.~49]{fraser2021}} and {\cite[Cor.~11.2]{falconer2014}}]
\label{prop:dimdist}
Let $I\subseteq \R$ be a nontrivial compact interval, and let $f:I\to
\R^d$
be a H\"older continuous function with exponent $\eta>0$.  Then we have:
\begin{itemize}
\item[(i)]
The upper box-counting dimension
of $f(I)\subseteq R^d$ satisfies
\begin{equation}
\label{eq:dimineq2}
\overline{\dim_B} f(I)\leq \frac{1}{\eta}.
\end{equation}
\item[(ii)] In the case when $d=1$,  
the upper box-counting dimension of the graph
$G(f)=\{ (x,f(x))\in\R^2: x\in I\}$ satisfies
\begin{equation}
\label{eq:dimineq3}
\overline{\dim_B} G(f) \leq 2-\eta.
\end{equation}
\end{itemize}
\end{Prop}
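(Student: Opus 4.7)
The plan is to rely on a single structural observation: an $\eta$-H\"older map with constant $C$ sends any set of diameter at most $\delta$ to a set of diameter at most $C\delta^{\eta}$. Both inequalities in the proposition are routine translations of this principle into estimates on covering numbers $N(\,\cdot\,,r)$.

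For part (i), I would fix $r>0$ and choose a near-optimal cover $\{U_i\}_{i=1}^{N(\Omega,r)}$ of $\Omega$ by sets of diameter at most $r$. Pushing forward by $f$ yields a cover $\{f(U_i)\}$ of $f(\Omega)$ whose members each have diameter at most $Cr^{\eta}$, so $N(f(\Omega),Cr^{\eta})\le N(\Omega,r)$. Setting $\rho = Cr^{\eta}$, we have $\log(1/\rho)=\eta\log(1/r)-\log C$, so dividing the previous inequality by $\log(1/\rho)$ and letting $\rho\to 0^+$ (equivalently $r\to 0^+$) gives $\overline{\dim_B}f(\Omega)\le \overline{\dim_B}\Omega/\eta$.

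For part (ii), I would specialize to $d=d'=1$, assume without loss of generality that $\Omega\subset[0,L]$ for some $L>0$, and estimate the graph column by column. Partition $[0,L]$ into $\lceil L/r\rceil$ subintervals of length $r$. On each such subinterval, the H\"older condition bounds the oscillation of $f$ by $Cr^{\eta}$, so the portion of $G(f)$ above that subinterval lies in a rectangle of width $r$ and height at most $Cr^{\eta}$, which is covered by $O(\max(1,\,r^{\eta-1}))$ axis-parallel squares of side $r$. Summing over all $O(r^{-1})$ columns yields $N(G(f),\sqrt{2}\,r)=O(r^{\eta-2})$, from which $\overline{\dim_B}G(f)\le 2-\eta$ follows.

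Neither part presents a genuine obstacle; the arguments are classical. The only bookkeeping points worth flagging are the logarithm manipulation in (i), where the constant $C$ drops out in the limit because $\log(1/\rho)\sim\eta\log(1/r)$, and the need in (ii) to use $\max(1,\,r^{\eta-1})$ rather than $r^{\eta-1}$ per column, so that the estimate remains valid in the range $\eta\ge 1$ where the height $Cr^{\eta}$ is smaller than the width $r$.
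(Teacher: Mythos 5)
Your argument is correct and coincides with the standard proof: the paper does not prove this proposition itself but cites it from Fraser (p.~49) and Falconer (Cor.~11.2), and those references argue exactly as you do --- pushing forward a near-optimal cover and absorbing the constant $C$ in the limit $\log(1/\rho)\sim\eta\log(1/r)$ for part (i), and the column-by-column oscillation bound $N(G(f),\sqrt{2}\,r)=O(r^{\eta-2})$ for part (ii). One tiny remark: since Definition \ref{def:holder} restricts $\eta\in(0,1)$, your $\max(1,r^{\eta-1})$ safeguard is not needed here, and in fact for $\eta>1$ it would only yield $N=O(r^{-1})$, i.e.\ dimension $\le 1$ rather than $\le 2-\eta$ (a vacuous regime anyway, as such an $f$ is locally constant).
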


\begin{proof}[Proof
of Corollaries \ref{cor:main} and \ref{cor:main-generalized}]
Corollary
\ref{cor:main} is the special case $k=p=1$ of Corollary
\ref{cor:main-generalized}, so it suffices to prove the latter result.

Assume that $S_k(t)$ satisfies the assumptions of Corollary
\ref{cor:main-generalized}.  By Theorem \ref{thm:main-generalized} it  
follows that the functions $\Fkp(t)$,
$|\Fkp(t)|$, $\Re \Fkp(t)$, and $\Im \Fkp(t)$, 
are H\"older continuous with exponent $\eta=(p-\alpha)/k$.
Applying Proposition \ref{prop:dimdist} then shows 
that the box-counting dimension 
of  $\Fkp([0,1])$ is bounded above by $1/\eta=k/(p-\alpha)$,
while the box-counting dimensions of the graphs $G(\Fkp)$, $G(|\Fkp|)$,
$G(\Re\Fkp)$, and $G(\Im\Fkp)$ are bounded above
by  $2-\eta=2-(p-\alpha)/k$.  These bounds are exactly the dimension bounds 
\eqref{eq:dimGFkp} and \eqref{eq:dimFkp} of
Corollary \ref{cor:main-generalized}.
\end{proof}


\section{Proof of Theorem \ref{thm:main}}
\label{sec:proof}

The proof depends in a crucial way on the summation by parts formula (Abel's identity),
a standard technique in analytic number theory.  For the convenience of
the reader, we recall this formula in the following lemma.


\begin{Lem}
	\label{lem:abel}
	Let $a:\N\to\C$ be an arithmetic function, and let $A(x)=\sum_{1\le n\le x}
	a(n)$, with the convention that $A(x)=0$ if $x<1$. 
	\begin{itemize}
		\item[(i)] Let $0<y<x$ be real numbers and
		assume that $\phi(u)$ is defined on the closed interval $[y,x]$ and has
		a continuous derivative on this interval.  Then
		\begin{equation}
			\label{eq:abel}
			\sum_{y<n\le x}a(n)\phi(n)=A(x)\phi(x)-A(y)\phi(y)-\int_y^x
			A(u)\phi'(u)du.
		\end{equation}
		\item[(ii)] Let $x\ge 1$ and assume that
		$\phi(u)$ is defined and has a continuous derivative on the interval
		$(0,x]$. Then
		\begin{equation}
			\label{eq:abel2}
			\sum_{1\le n\le x}a(n)\phi(n)=A(x)\phi(x)-\int_1^x A(u)\phi'(u)du.
		\end{equation}
	\end{itemize}
\end{Lem}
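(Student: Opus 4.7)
The plan is to prove part (i) by a direct discrete-to-continuous translation and then deduce part (ii) as an immediate specialization. The core idea is standard: expand $a(n) = A(n) - A(n-1)$, perform a discrete Abel (partial) summation via reindexing, and convert the resulting forward differences $\phi(n+1) - \phi(n)$ into integrals of $\phi'$ using the fundamental theorem of calculus together with the fact that the step function $A(u)$ is constant on each half-open interval $[n, n+1)$.

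Concretely, for part (i) I would let $M$ be the smallest integer strictly greater than $y$ and $N = \lfloor x \rfloor$, so the left-hand side becomes $\sum_{n=M}^{N} a(n)\phi(n)$, understood as the empty sum if $N < M$. Substituting $a(n) = A(n) - A(n-1)$ and shifting the index in the second term yields the finite Abel identity
\begin{equation*}
\sum_{n=M}^{N} a(n)\phi(n) = A(N)\phi(N) - A(M-1)\phi(M) - \sum_{n=M}^{N-1} A(n)\bigl(\phi(n+1) - \phi(n)\bigr).
\end{equation*}
Writing $\phi(n+1) - \phi(n) = \int_n^{n+1}\phi'(u)du$ and using $A(u) = A(n)$ on $[n,n+1)$ converts the remaining sum into $\int_M^N A(u)\phi'(u)du$. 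To replace the integer endpoints $M, N$ by the prescribed $y, x$, I would use that $A(u)$ equals $A(M-1) = A(y)$ throughout $[y,M)$ and equals $A(N) = A(x)$ throughout $[N,x]$, so that $\int_y^M A(u)\phi'(u)du = A(y)(\phi(M) - \phi(y))$ and $\int_N^x A(u)\phi'(u)du = A(x)(\phi(x) - \phi(N))$; a small algebraic rearrangement then recovers \eqref{eq:abel}. Part (ii) is an immediate consequence: apply part (i) with any $y \in (0,1)$ and observe that the convention $A(u) = 0$ for $u < 1$ forces both $A(y)\phi(y) = 0$ and $\int_y^1 A(u)\phi'(u)du = 0$, which collapses the formula to \eqref{eq:abel2}.

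The only real subtlety, and the one requiring care, is the endpoint bookkeeping in part (i): one must verify that the contributions of the partial intervals $[y, M)$ and $[N, x]$ combine with the telescoped discrete boundary terms $A(N)\phi(N) - A(M-1)\phi(M)$ to produce exactly the clean expression $A(x)\phi(x) - A(y)\phi(y)$, with correct handling of the degenerate case $N < M$ (in which $A(u)$ is constant on all of $[y,x]$ and both sides of \eqref{eq:abel} reduce to $0$). Beyond this routine accounting there is no analytic difficulty, since continuity of $\phi'$ on the closed interval $[y,x]$ guarantees integrability and legitimizes every interchange of finite summation and integration.
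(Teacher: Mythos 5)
Your proof is correct. The paper disposes of part (i) simply by citing Lemma 4.2 of Apostol's \emph{Introduction to Analytic Number Theory} and obtains (ii) from (i) exactly as you do (it takes $y=1/2$ and uses $A(u)=0$ for $u<1$); your self-contained argument for (i) --- writing $a(n)=A(n)-A(n-1)$, telescoping, and converting the forward differences $\phi(n+1)-\phi(n)$ into integrals of $\phi'$ over intervals where $A$ is constant, with the endpoint bookkeeping you describe --- is the standard proof underlying that reference, so the two approaches coincide.
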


\begin{proof}
	Assertion (i) of the lemma is exactly Lemma 4.2
	of \cite{apostol1976}. Assertion (ii) follows from (i) on letting  
	$y=1/2$ in \eqref{eq:abel} and noting that $A(u)=0$ for $u<1$.
\end{proof}

For the remainder of this section, we assume that $f:\N\to\C$ is an
arithmetic function, we let $S(x,t)$ be the associated exponential sum
defined in \eqref{eq:S-def}, and we assume that $S(x,t)$ satisfies the
hypothesis \eqref{eq:S-bound} of Theorem \ref{thm:main} with some
constants $C$ and $\alpha$ with $0<\alpha<1$.



Given a positive integer $N$, let $F_N(t)$ denote the $N$th partial sum of $F(t)$, i.e.,  
\begin{equation}
	\label{eq:FN-def}
	F_N(t)=\sum_{1\leq n\leq N}\frac{f(n)e^{2\pi i nt}}{n}.
\end{equation}


\begin{Lem}
	\label{lem:cauchy-estimate}
	Uniformly in integers $M>N\ge 1$ and $t\in\R$ we have
	\begin{equation}
		\label{eq:cauchy-estimate}
		|F_M(t)-F_N(t)|\le C_{11} N^{\alpha-1},
	\end{equation}
where
\begin{equation}
\label{eq:C11-def}
C_{11}=C_{11}(C,\alpha)=\frac{2C}{1-\alpha}.
\end{equation}
\end{Lem}

\begin{proof}
	Applying Lemma \ref{lem:abel}(i) with $y=N$, $x=M$, 
	$a(n)=f(n)e^{2\pi i n t}$ (so that
	$A(u)=S(u,t)$) and $\phi(u)=1/u$ 
	along with the estimate \eqref{eq:S-bound}, we obtain
	\begin{align*}
		\left|F_M(t)-F_N(t)\right|&=
		\left|\sum_{N<n\le M} \frac{f(n)e^{2\pi i nt}}{n}\right|
		\\
		&=\left|\frac{S(M,t)}{M}-\frac{S(N,t)}{N}
		+\int_N^M \frac{S(u,t)}{u^2}du\right|
		\\
		&\le C
		\left(M^{\alpha-1}+N^{\alpha-1}+\int_N^M
		u^{\alpha-2}du\right)
		\\
		&= C\left(M^{\alpha-1} + N^{\alpha-1}
		+\frac{1}{1-\alpha}\left(N^{\alpha-1}-M^{\alpha-1}\right)\right)
		\\
		&\le C\left(N^{\alpha-1} 
		+\frac{N^{\alpha-1}}{1-\alpha}
		\right)
		\\
		&\le \frac{2C}{1-\alpha} N^{\alpha-1} =C_{11} N^{\alpha-1},
	\end{align*}
	as claimed.	
\end{proof}


\begin{Lem}
	\label{lem:tail-estimate}
	The series $F(t)$ converges uniformly in $t$, and we have 
	\begin{equation}
		\label{eq:tail-estimate}
		F(t)-F_N(t)\le C_{11} N^{\alpha-1}.
	\end{equation}
\end{Lem}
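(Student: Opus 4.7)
The plan is to deduce both claims directly from the Cauchy estimate of Lemma \ref{lem:cauchy-estimate}.

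First I would note that since $0<\alpha<1$, the bound $N^{\alpha-1}\to 0$ as $N\to\infty$, so Lemma \ref{lem:cauchy-estimate} asserts that the partial sums $\{F_N(t)\}_{N\ge 1}$ form a uniform Cauchy sequence in $t\in\R$: given $\varepsilon>0$, one can choose $N_0$ so that the implied constant times $N^{\alpha-1}$ is less than $\varepsilon$ for all $N\ge N_0$, and then $|F_M(t)-F_N(t)|<\varepsilon$ for all $M>N\ge N_0$ and all $t$. By the Cauchy criterion for uniform convergence, the partial sums $F_N(t)$ converge uniformly on $\R$ to a limit function, which is precisely the series $F(t)$ defined in \eqref{eq:F-def}.

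To establish the tail estimate \eqref{eq:tail-estimate}, I would fix $N$ and $t$ and let $M\to\infty$ in the bound $|F_M(t)-F_N(t)|\lesssim N^{\alpha-1}$ from Lemma \ref{lem:cauchy-estimate}. Because $F_M(t)\to F(t)$ (in particular pointwise) and the right-hand side is independent of $M$, the limit yields $|F(t)-F_N(t)|\lesssim N^{\alpha-1}$ with the same implied constant, uniformly in $t$.

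There is no real obstacle here; the entire argument is a routine application of the Cauchy criterion together with passage to the limit in an already established inequality. The only thing to be careful about is that the implied constant in Lemma \ref{lem:cauchy-estimate} depends only on $C$ and $\alpha$ (not on $M$ or $N$), which is exactly what is needed both for uniform convergence in $t$ and for the constant in \eqref{eq:tail-estimate} to depend only on $C$ and $\alpha$.
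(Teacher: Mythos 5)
Your argument is correct and is exactly the paper's proof: both assertions are deduced from Lemma \ref{lem:cauchy-estimate} via the uniform Cauchy criterion and by letting $M\to\infty$ in the established bound. No issues.
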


\begin{proof}
	Both assertions follow from Lemma \ref{lem:cauchy-estimate} on letting
	$M\to\infty$.
\end{proof}


\begin{Lem}
	\label{lem:FN-diff}
	Uniformly for $t\in \R$, $h>0$, and any positive integer $N$ 
	we have
	\begin{equation}
		\label{eq:FN-diff}
		|F_N(t+h)-F_N(t)|\le C_{12}hN^{\alpha},
	\end{equation}
where
\begin{equation}
\label{eq:C12-def}
C_{12}=C_{12}(C,\alpha)=\frac{6\pi C}{\alpha}.
\end{equation}

\end{Lem}

\begin{proof}
	We have
	\begin{align}
		\label{eq:lem2a}
		F_N(t+h)-F_N(t)&
		=\sum_{n=1}^N\frac{f(n)\left(e^{2\pi i n(t+h)}-e^{2\pi i nt}\right)}{n}
		=\sum_{n=1}^N f(n)e^{2\pi i nt}\phi(n),
	\end{align}
	where
	\begin{equation}
		\label{eq:phi-def}
		\phi(x)=\frac{e^{2\pi ihx}-1}{x}.
	\end{equation}
	Applying Lemma \ref{lem:abel}(ii) with $a(n)=f(n)e^{2\pi i n t}$ 
	and $\phi(x)$ defined by \eqref{eq:phi-def}, 
	the sum on the right of \eqref{eq:lem2a} becomes
	\begin{align}
		\label{eq:lem2b}
		\sum_{n=1}^N f(n)e^{2\pi i nt}\phi(n) &=
		S(N,t)\phi(N)-\int_1^NS(x,t)\phi'(x)dx.
	\end{align}
	Using the elementary inequality $|e^{iu}-1|\le u$ for all $u\in \R$, 
	we see that the functions $\phi(x)$
	and $\phi'(x)$ in \eqref{eq:lem2b} satisfy
	\begin{align}
		\label{eq:phi-bound}
		|\phi(x)|&=\frac{|e^{2\pi i hx}-1|}{x}\le
		\frac{|2\pi hx|}{x}= 2 \pi h,
		\\
		\label{eq:phiprime-bound}
		|\phi'(x)|&\le
		\frac{|e^{2\pi i hx}-1|}{x^2}+\frac{|2\pi i he^{2\pi i
				hx}|}{x}
		\le 2\pi \frac{h}{x} + 2\pi \frac{h}{x} = 4\pi
		\frac{h}{x}.
	\end{align}
	Substituting the bounds \eqref{eq:phi-bound}
	and \eqref{eq:phiprime-bound} along with our
	assumption \eqref{eq:S-bound}
	into the right-hand side of \eqref{eq:lem2b}, we obtain 
	\begin{align}
		\label{eq:lem2c}
		\left|\sum_{n=1}^N f(n)e^{2\pi i nt}\phi(n)\right|
		&\le 2\pi h C N^\alpha  + 4\pi h C 
		\int_1^N x^{\alpha-1}dx
		\\
		\notag
		&\le ChN^\alpha\left(2\pi + \frac{4\pi}{\alpha}\right)
		\\
		\notag
		&\le Ch N^\alpha \frac{6\pi}{\alpha} 
		= C_{12} h N^{\alpha}.
	\end{align}
	Combining this with \eqref{eq:lem2a} yields the
	desired estimate \eqref{eq:FN-diff}.
\end{proof}


\begin{proof}[Proof of Theorem \ref{thm:main}]
For the proof of the bound \eqref{eq:F-bound},
let $t\in\R$ and $h>0$ be given. Since $F(t)$ is a periodic function
with period $1$, we may assume that
\begin{equation}
\label{eq:h-bound}
0<h\le 1.
\end{equation}

Let $N$ be a positive integer. Then 
	\begin{align}
		\label{eq:F-diff}
		\tiny|F(t+h)-F(t)|&= 
		\Bigl|\left(F(t+h)-F_N(t+h)\right)
		-\left(F(t)-F_N(t)\right)+
		\left(F_N(t+h)-F_N(t)\right)\Bigr|
		\\
		\notag
		&\le \left|F(t+h)-F_N(t+h)\right| 
		+ \left|F(t)-F_N(t)\right| + 
		\left|F_N(t+h)-F_N(t)\right|. 
	\end{align}
	By Lemma \ref{lem:tail-estimate} the first two terms on the right 
	of \eqref{eq:F-diff}
	are bounded by $\le C_{11} N^{\alpha-1}$ each, 
	and by Lemma \ref{lem:FN-diff} the third term is bounded by 
	$\le C_{12} h N^{\alpha}$. 
	It follows that 
	\begin{equation}
		\label{eq:F-diff2}
		|F(t+h)-F(t)|\le 2C_{11} N^{\alpha-1} + C_{12} h N^{\alpha}
	\end{equation}
	for any positive integer $N$. To (approximately)
	optimize this bound, we choose $N$ as 
	\begin{equation}
		\label{eq:N-def}
		N=N_h=\left\lfloor 1/h\right\rfloor,
	\end{equation}
	where $\left\lfloor \cdot\right\rfloor$ is the floor function.
	In view of our assumption \eqref{eq:h-bound},
	$N$ is a positive integer satisfying $1/(2h)\le N\le 1/h$.
	Therefore we have 
	\begin{align*}
	2C_{11}	N^{\alpha-1}+ C_{12}hN^\alpha
	&\le 2C_{11}	(2h)^{1-\alpha}+ C_{12} h^{1-\alpha}
	\\
	&\le (4C_{11}+C_{12})h^{1-\alpha}
	\\
	&= C\left(\frac{8}{1-\alpha}+
	\frac{6\pi}{\alpha}\right) h^{1-\alpha}
	\\
	&\le \frac{6\pi C}{\alpha(1-\alpha)} h^{1-\alpha} = C_1
	h^{1-\alpha}.
	\end{align*}
	Combining this with \eqref{eq:F-diff2} yields 
	the inequality \eqref{eq:F-bound} for $|F(t+h)-F(t)|$.
	
	In view of the elementary inequalities
	\begin{align*}
		||F(t+h)|-|F(t)||&\le |F(t+h)-F(t)|,
		\\
		|\Re F(t+h)-\Re F(t)|&\le |F(t+h)-F(t)|,
		\\
		|\Im F(t+h)-\Im F(t)|&\le |F(t+h)-F(t)|
	\end{align*}
	the same
	conclusion holds for the functions $|F(t)|$, $\Re F(t)$, and $\Im
	F(t)$.   
	
	This completes the proof of Theorem \ref{thm:main}.
\end{proof}



\section{Proof of Theorem \ref{thm:main-generalized}} 
\label{sec:generalization}

Let $k$ be a positive integer and $p$ a positive real number.
Let  $f:\N\to\C$
be an arithmetic function and let $S_k(x,t)$ and $\Fkp(t)$ denote  
the associated exponential sums and Fourier series
defined by (see \eqref{eq:Sk-def} and \eqref{eq:Fkp-def0})
\begin{align*}
S_k(x,t)&=\sum_{1\le n\le x}f(n)e^{2\pi i n^kt},
\\
\Fkp(t)&=\sum_{n=1}^\infty \frac{f(n)e^{2\pi i n^kt}}{n^p}.
\end{align*}

Define an arithmetic function $\ft$ by
\begin{equation}
	\label{eq:ft-def}
	\ft(n)=\begin{cases}
		m^{k-p}f(m) &\text{if $n=m^k$ for some $m\in\N$,}
		\\
		0&\text{otherwise,}
	\end{cases}
\end{equation}
and let $\St(x,t)$ and $\Ft(t)$ be defined as in \eqref{eq:S-def} and
\eqref{eq:F-def}, but with respect to the arithmetic function $\ft(n)$.
Then
\begin{align}
	\label{eq:St-S}
	\St(x,t)&=\sum_{1\le n\le x} \ft(n)e^{2\pi i nt}
	=\sum_{m\le x^{1/k}}m^{k-p}f(m)e^{2\pi i m^k t},
	\\
	\label{eq:Ft-F}
	\Ft(t)&
	=\sum_{n=1}^\infty \frac{\ft(n)e^{2\pi i nt}}{n}
	=\sum_{m=1}^\infty \frac{m^{k-p}f(m)e^{2\pi i m^kt}}{m^k}
	=\Fkp(t).
\end{align}
Applying Lemma \ref{lem:abel} with $\phi(u)=u^{k-p}$ and
$a(m)=f(m)e^{2\pi i m^k t}$ 
(so that $A(u)= \sum_{m\le u}f(m)e^{2\pi i m^k t}\allowbreak
= S_k (u,t)$),  
we obtain
\begin{align*}
\left|\sum_{m\le x^{1/k}}m^{k-p}f(m)e^{2\pi i m^k t}\right|
&=\left|S_k(x^{1/k},t) x^{(k-p)/k}-\int_1^{x^{1/k}}
S_k(u,t)(k-p)u^{k-p-1}du\right|
\\
\notag
&\le C 
x^{\alpha/k} x^{(k-p)/k} +
C|k-p|\int_{1}^{x^{1/k}} u^{\alpha}u^{k-p-1}du.
\end{align*}
Note that, by \eqref{eq:alpha-bound}, $0<\alpha+k-p<k$. Hence,
\begin{align}
	\label{eq:St-S2}
	\left|\sum_{m\le x^{1/k}}m^{k-p}f(m)e^{2\pi i m^k t}\right|
	&\leq C x^{(\alpha+k-p)/k} +C|k-p|\frac{x^{(\alpha+k-p)/k}-1}{\alpha+k-p}
	\\
	\notag
	&\le \Ct  x^{(\alpha+k-p)/k},
\end{align}
where 
\begin{equation}
\label{eq:C1t-formula}
\Ct =\left(1+\frac{|k-p|}{\alpha+k-p}\right)C.
\end{equation}

Combining \eqref{eq:St-S2} with \eqref{eq:St-S} yields  
\begin{equation}
\label{eq:St-bound}
|\St(x,t)| \leq \Ct x^{(\alpha+k-p)/k}=\Ct x^{\alphat},
\end{equation}
where 
\begin{equation}
\label{eq:alphat-formula}
\alphat=\frac{\alpha+k-p}{k}.
\end{equation}
Our assumption
\eqref{eq:alpha-bound} ensures that
the exponent $\alphat$ defined in \eqref{eq:alphat-formula}
satisfies $0<\alphat<1$. We can therefore apply 
Theorem \ref{thm:main} to conclude that
\begin{align}
	\label{eq:Ft-bound}
	|\Ft(t+h)-\Ft(t)|&\le \Conet h^{1-\alphat}
	\quad (t\in\R,\, h>0),
\end{align}
where 
\begin{align*}
\Conet &=C_1(\Ct,\alphat)
= C_1\left(\left(1+\frac{|k-p|}{\alpha + k-p}\right)C,
\frac{\alpha+k-p}{k}\right)
=C_2(k,p,C,\alpha)
\end{align*}
is the constant in Theorem \ref{thm:main-generalized}.
In view of \eqref{eq:Ft-F} this proves the 
bound \eqref{eq:Fk-bound} of Theorem \ref{thm:main-generalized}
for $\Fkp(t)$, which in turn implies analogous bounds (with the same
constant) for the functions $|\Fkp(t)|$,
$\Re \Fkp(t)$, and $\Im \Fkp(t)$.

This completes the proof of Theorem \ref{thm:main-generalized}.
\qed


\section{Applications}\label{sec:applications}


\subsection{Weierstrass type functions}

The classical Weierstrass functions are defined by  
\begin{equation}
\label{eq:Weierstrass-def}
\sum_{k=1}^\infty a^k \cos(2\pi b^k t),
\end{equation}
where $a$ and $b$ are positive real numbers satisfying $1/b<a<1$. These
functions have been introduced more than a century ago 
by Weierstrass and Hardy (see \cite{hardy1916})  as examples of
continuous, but nowhere differentiable functions, and they 
have been extensively studied in the literature.
In particular, it is now known that the function
\eqref{eq:Weierstrass-def} is H\"older continuous with 
exponent $-\log_ba$, that this exponent is both globally and locally
optimal \cite{baranski2015}, and that the Hausdorff \cite{shen2018} and box-counting dimensions \cite{hardy1916}
of the graph of this function over the interval $[0,1]$ are both 
equal to $2+\log_ba$.

Here we consider generalizations  
of the Weierstrass function of the form
\begin{align}
\label{eq:Wab-def}
W_{a,b}(f;t)&=\sum_{k=1}^\infty f(k) a^k e^{2\pi i b^k t},
\end{align}
with arbitrary bounded coefficients $f(k)$.  
From Theorem \ref{thm:main}
we will derive the following result, which shows that, for integer
values of $b$,
this much more
general class of functions satisfies the same  H\"older and dimension
bounds as the classical Weierstrass function \eqref{eq:Weierstrass-def}.


\begin{Cor}
\label{cor:weierstrass}
Let $a$ be a real number satisfying $0<a<1$, let $b$ be an integer
satisfying $b>1/a$, and let $f:\N\to\C$ be an arbitrary bounded function.
Then we have:
\begin{itemize}
\item[(i)] The function $W_{a,b}(f;t)$ 
is H\"older continuous with exponent $-\log_b a$, and the same holds
for the real-valued functions
$|W_{a,b}(f;t)|$, $\Re W_{a,b}(f;t)$, and $\Im
W_{a,b}(f;t)$.
\item[(ii)]
The upper box-counting dimensions of the graphs of the  
functions $|W_{a,b}(f;t)|$, $\Re W_{a,b}(f;t)$, and $\Im W_{a,b}(f;t)$
over the interval $[0,1]$ are bounded by $\le 2+\log_b a$.
\end{itemize}
\end{Cor}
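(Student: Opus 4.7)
The plan is to view $W_{a,b}(f;t)$ as a special instance of the series \eqref{eq:F-def} and then apply Theorem \ref{thm:main} directly. Since $b$ is an integer $\geq 2$, the numbers $b^k$ ($k\geq 1$) are distinct positive integers, which lets us introduce an arithmetic function $g:\N\to\C$ supported only on powers of $b$ by setting
\[
g(n) = \begin{cases} (ab)^k f(k) & \text{if } n = b^k \text{ for some } k\geq 1,\\ 0 & \text{otherwise.} \end{cases}
\]
A direct termwise comparison then shows that the Fourier series $F(g;t)$ defined in \eqref{eq:F-def} coincides with $W_{a,b}(f;t)$.

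The heart of the argument will be to establish a bound of the form $|S(g;x,t)|\lesssim x^{\alpha}$ with $\alpha = 1+\log_b a$. Only indices with $b^k\leq x$ contribute to $S(g;x,t)$, and because $f$ is bounded and $ab>1$ (from $b>1/a$), a routine geometric-series estimate gives $|S(g;x,t)|\lesssim (ab)^{\lfloor \log_b x\rfloor}$. Rewriting $(ab)^{\log_b x} = x\cdot a^{\log_b x} = x\cdot x^{\log_b a} = x^{1+\log_b a}$ (and absorbing the effect of the floor into the implied constant, using $0<a<1$) yields the claimed exponent $\alpha$. The hypotheses $0<a<1$ and $b>1/a$ translate exactly into $0<\alpha<1$, which is the range required to apply Theorem \ref{thm:main}.

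Once the exponential-sum bound is in place, the corollary is immediate. Part (i) of Theorem \ref{thm:main}, applied to $g$, gives H\"older continuity of $W_{a,b}(f;t)$ with exponent $1-\alpha = -\log_b a$, and the same for $|W_{a,b}(f;t)|$, $\Re W_{a,b}(f;t)$ and $\Im W_{a,b}(f;t)$; part (ii) furnishes the dimension bound $\overline{\dim_B} \leq 1+\alpha = 2+\log_b a$ for each of the three real graphs. I do not anticipate any real obstacle: the only point requiring care is the geometric-series bookkeeping in the $S(g;x,t)$ estimate, and the integrality of $b$, which is used essentially to ensure that $g$ is unambiguously defined on $\N$ and that the frequencies $b^k$ fit the template of \eqref{eq:F-def}. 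Generalizing to non-integer $b$ would fall outside the scope of Theorem \ref{thm:main} as stated and would require a separate treatment.
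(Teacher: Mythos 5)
Your proposal is correct and follows essentially the same route as the paper: the auxiliary coefficient function supported on the powers of $b$, the geometric-series bound $|S(g;x,t)|\lesssim x^{1+\log_b a}$, the verification that $\alpha=1+\log_b a\in(0,1)$, and the direct application of Theorem \ref{thm:main} all match the paper's argument. No issues.
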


\begin{Remark}
The restriction of the parameter $b$ in the corollary 
to integer values is a purely technical one as it allows us to derive
the result directly from Theorem \ref{thm:main}.  
By adapting the proof of Theorem \ref{thm:main}, one can show 
that the corollary remains valid without this restriction. 
\end{Remark}

\begin{proof}
Let $f:\N\to\C$ be a bounded function. 
By rescaling $f$ if necessary we may assume, without loss of generality, 
that $|f(n)|\le 1$ for all $n\in\N$.  Define an arithmetic function $\ft$ by
\begin{equation}
\label{eq:fabt-def}
\ft(n)=\begin{cases}
(ab)^kf(k) &\text{if $n=b^k$ for some $k\in\N$,}
			\\
0&\text{otherwise,}
\end{cases}
\end{equation}
and let $\St(x,t)$ and $\Ft(t)$ be defined as in \eqref{eq:S-def} and
\eqref{eq:F-def}, but with respect to the arithmetic function $\ft(n)$.
Setting 
\begin{equation}
\label{eq:kx-def}
k_x=\lfloor \log_bx\rfloor,
\end{equation}
we then have 
\begin{align}
\label{eq:Sabt-S}
\St(x,t)&=\sum_{1\le n\le x} \ft(n)e^{2\pi i nt}
		=\sum_{1\le k\le k_x}(ab)^{k}f(k)e^{2\pi i b^k t},
		\\
		\label{eq:Fabt-F}
		\Ft(t)&
		=\sum_{n=1}^\infty \frac{\ft(n)e^{2\pi i nt}}{n}
		=\sum_{k=1}^\infty f(k)a^k e^{2\pi i b^kt}
=W_{a,b}(f;t).	
\end{align}
Since, by assumption,  $b>1/a$ and $|f|\le 1$, we have
\begin{align}
\label{eq:Sabt-S2}
\left|\sum_{1\le k\le k_x}(ab)^{k}f(k)e^{2\pi i b^k t}\right|
&\le \left|\sum_{1\le k\le k_x}(ab)^{k}\right|
\le\frac{(ab)^{k_x+1}}{ab-1}
\\
&\le \frac{(ab)^{\log_b x+1}}{ab-1}
\le\frac{ab}{ab-1}x^{1+\log_ba}.
\notag
\end{align}
Combining \eqref{eq:Sabt-S2} with \eqref{eq:Sabt-S}, it follows that the
function $\ft$ satisfies the condition \eqref{eq:S-bound} of Theorem
\ref{thm:main} with exponent $\alpha=1+\log_ba=\log(ab)/\log b$ and
constant $C=C_{a,b}=ab/(ab-1)$. 
Moreover, our assumptions $0<a<1$ and  $ab>1$ ensure that $\alpha$
satisfies the condition $0<\alpha<1$ of  Theorem \ref{thm:main}.
The theorem therefore implies 
that the Fourier series $\Ft(t)$ is H\"older continuous with
exponent $1-\alpha=-\log_ba$, and that the same holds for the 
functions $|\Ft(t)|$, $\Re \Ft(t)$, and $\Im \Ft(t)$.  Since, by
\eqref{eq:Fabt-F}, $\Ft(t)=W_{a,b}(t)$, this proves the claim of part
(i) of the corollary.

Part (ii) follows from Corollary \ref{cor:main}.
\end{proof}

\begin{Remark}
In the case when $f\equiv1$, the function $\Re W_{a,b}(t)$ reduces to
the classical Weierstrass function \eqref{eq:Weierstrass-def}. In view
of the remarks at the beginning of the section, the bounds $-\log_ba$
and $2+\log_ba$ for the  H\"older exponent and dimensions provided by
the corollary are best-possible in this case.   This example shows that
the bounds provided by Theorem \ref{thm:main} 
on the H\"older exponent and dimensions are optimal. 
\end{Remark}


\subsection{Riemann type functions}
The Riemann function
\begin{equation}
	\label{eq:Riemann-def}
\sum_{n=1}^\infty \frac{\sin(\pi n^2t)}{n^2}
\end{equation}
is another classical example of a function that is continuous, but
non-differentiable almost everywhere. 
In contrast to the Weierstrass function \eqref{eq:Weierstrass-def},
which has the same pointwise 
H\"older exponent, $\eta(t) = -\log_ba$, at each point $t$ (see
\cite{hardy1916}), 
the pointwise H\"older exponent $\eta(t)$ of the Riemann function 
is strongly dependent on the diophantine approximation properties of
$t$. More precisely, the set $\{\eta(t): t\in[0,1]\}$ 
of pointwise H\"older exponents consists of 
the closed interval $[1/2,3/4]$ along with the single point $1$;
see, for example, \cite{duistermaat1991} and \cite{jaffard1996}. 
It follows that $1/2$ is the best-possible \emph{uniform} H\"older exponent
for the Riemann function.

As an application of Theorem \ref{thm:main-generalized} we show  in
the following corollary
that the same H\"older exponent, $1/2$, holds for a class of generalized
Riemann functions defined by 
\begin{equation}
\label{eq:Rf-def}
R(f;t)=\sum_{n=1}^\infty \frac{f(n)e^{2\pi i n^2 t}}{n^2},
\end{equation}
where the coefficients $f(n)$ are bounded, but otherwise arbitrary.

\begin{Cor}
\label{cor:riemann}
Let $f:\N\to\C$ be an arbitrary bounded arithmetic function.
Then we have:
\begin{itemize}
\item[(i)] The function $R(f;t)$ 
is H\"older continuous with exponent $1/2$, and the same holds
for the real-valued functions
$|R(f;t)|$, $\Re R(f;t)$, and $\Im R(f;t)$.
\item[(ii)]
The upper box-counting dimensions of the graphs of the functions 
$|R(f;t)|$, $\Re R(f;t)$, and $\Im R(f;t)$
over the interval $[0,1]$ are bounded by $\le 3/2$.
\end{itemize}
\end{Cor}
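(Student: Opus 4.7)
The plan is to apply Theorem \ref{thm:main-generalized} directly, with the parameter choices $k=2$ and $p=2$, so that the series $F_{k,p}(t)$ in \eqref{eq:Fkp-def} coincides exactly with $R(f;t)$ in \eqref{eq:Rf-def}. The only input we need to provide is a suitable bound of the form \eqref{eq:Sk-bound} on the exponential sum
\begin{equation*}
S_2(x,t) = \sum_{1\le n\le x} f(n) e^{2\pi i n^2 t}.
\end{equation*}
The remarkable feature here is that no nontrivial cancellation is required: we may simply use the boundedness hypothesis on $f$. If $M$ is a bound for $|f|$, the triangle inequality gives $|S_2(x,t)|\le Mx$ for all $x\ge 1$ and $t\in\R$. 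Thus \eqref{eq:Sk-bound} holds with $\alpha = 1$ and $C = M$.

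Next, I would verify that this choice of $\alpha$ is admissible, i.e., satisfies \eqref{eq:alpha-bound}. With $k=p=2$, condition \eqref{eq:alpha-bound} reads $\max(0, 0) < \alpha < 2$, which is clearly satisfied by $\alpha = 1$. With these parameters in hand, Theorem \ref{thm:main-generalized}(i) immediately yields that $R(f;t) = F_{2,2}(t)$ is H\"older continuous with exponent $(p-\alpha)/k = (2-1)/2 = 1/2$, and the same holds for $|R(f;t)|$, $\Re R(f;t)$, and $\Im R(f;t)$. This establishes part (i) of the corollary.

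For part (ii), I would simply invoke Theorem \ref{thm:main-generalized}(ii) with the same parameter choices, which gives the upper box-counting dimension bound
\begin{equation*}
2 - \frac{p-\alpha}{k} = 2 - \frac{1}{2} = \frac{3}{2}
\end{equation*}
for the graphs of $|R(f;t)|$, $\Re R(f;t)$, and $\Im R(f;t)$ over $[0,1]$. There is no real obstacle in this proof; the entire content lies in recognizing that the ``trivial'' linear bound on $S_2(x,t)$ is exactly the borderline case of \eqref{eq:Sk-bound} that produces the exponent $1/2$ predicted by Theorem \ref{thm:main-generalized}, thereby matching the best-possible uniform H\"older exponent known for the classical Riemann function.
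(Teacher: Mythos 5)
Your proposal is correct and is essentially identical to the paper's own proof: both identify $R(f;t)$ with $F_{2,2}(f;t)$, use the trivial bound $|S_2(x,t)|\lesssim x$ to get $\alpha=1$, check condition \eqref{eq:alpha-bound}, and apply Theorem \ref{thm:main-generalized}. Nothing further is needed.
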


\begin{proof}
As in the proof of Corollary \ref{cor:weierstrass},
we may assume without loss of generality 
that $f:\N\to\C$ is bounded by $1$.
Note that the series $R(f;t)$ defined in \eqref{eq:Rf-def}
is exactly the series $\Fkp(f;t)$
of Theorem \ref{thm:main-generalized} when $p=k=2$.
Moreover, by our assumption that $|f|\le 1$ we have 
\begin{align*}
|S_2(x,t)|&=\left|\sum_{1\le n\le x} f(n)e^{2\pi i n^2 t}\right|
\le \sum_{1\le n\le x} |f(n)|\le  x.
\end{align*}
Thus the assumption
\eqref{eq:Sk-bound} of Theorem
\ref{thm:main-generalized} holds with constants $C=1$ and $\alpha=1$.
Moreover, the inequality \eqref{eq:alpha-bound} is satisfied when 
$\alpha=1$ and $k=p=2$. 
The theorem therefore implies that $R(f;t)$ is H\"older continuous with
exponent $(p-\alpha)/k=(2-1)/2=1/2$. This proves part (i) of the
corollary.  The assertions of part (ii) follow from Corollary
\ref{cor:main-generalized}. 
\end{proof}


\subsection{Fourier series associated with the M\"obius and Liouville
	functions}

The M\"obius function $\mu$ is the arithmetic function defined by
$\mu(1)=1$; $\mu(n)=(-1)^r$ if $n$ is the product of $r$ \emph{distinct}
primes; and $\mu(n)=0$ otherwise. The closely related Liouville function
$\lambda$ is defined by $\lambda(1)=1$ and $\lambda(n)=(-1)^r$ if $n$ is
the product of $r$ \emph{not necessarily distinct} primes.

Davenport \cite{davenport1937} showed 
that the exponential sums $S(\mu;x,t)$ associated with the M\"obius
function satisfy
\begin{equation}
\label{eq:S-mu-bound-davenport}
	S(\mu;x,t)
	=\sum_{1\le n\le x} \mu(n) e^{2\pi i nt}\ll_A x(\log x)^{-A},
\end{equation}
uniformly in $t$, for any fixed constant $A$. Here 
the \emph{Vinogradov notation}  ``$f(x)\ll g(x)$'' 
means that there exist constants $C$ and $x_0$ such that $|f(x)|\le C
|g(x)|$ holds for all $x\ge x_0$ and the subscript $A$ in $\ll_A$
indicates that these constants may depend on $A$.

Using \eqref{eq:S-mu-bound-davenport} along with an elementary argument
that relates estimates for $S(\lambda;x,t)$ to estimates of the same
quality for
$S(\mu;x,t)$, Bateman and Chowla \cite{bateman-chowla1963} showed that
the Fourier series associated with $\mu$ and $\lambda$, i.e., the
functions
\begin{equation}
	\label{eq:Moebius-Liouville-series}
		F(\mu; t)=\sum_{n=1}^\infty\frac{\mu(n)e^{2\pi i n t}}{n},
		\quad
	F(\lambda; t)=\sum_{n=1}^\infty\frac{\lambda(n)e^{2\pi i n
	t}}{n},
\end{equation}
converge uniformly and hence represent continuous functions of $t$.

In light of the Bateman-Chowla result it is natural to ask whether these
series are H\"older continuous.
Theorem \ref{thm:main} yields such a result provided a stronger form 
of the estimate \eqref{eq:S-mu-bound-davenport}, with a saving of a power of
$x$, is available.   Unconditionally, no such estimate is known to date.
However, Baker and Harman \cite{baker-harman1991} showed that
under the assumption of the Generalized Riemann Hypothesis one has 
\begin{equation}
	\label{eq:S-mu-bound}
	\sum_{1\le n\le x} \mu(n) e^{2\pi i nt}\ll_\epsilon
	x^{3/4+\epsilon},
	\quad
	\sum_{1\le n\le x} \lambda(n) e^{2\pi i nt}
	\ll_\epsilon  x^{3/4+\epsilon},
\end{equation}
for any fixed $\epsilon>0$ and uniformly in $t$.

It is conjectured (see, e.g., (6) in \cite{balog-perelli1998}) that the
exponent $3/4$ in these estimates can be replaced by $1/2$, i.e.,
that, for any fixed $\epsilon>0$, we have uniformly in $t$
\begin{equation}
	\label{eq:S-mu-bound-conj}
	\sum_{1\le n\le x} \mu(n) e^{2\pi i nt}
	\ll_\epsilon x^{1/2+\epsilon},
	\quad
	\sum_{1\le n\le x} \lambda(n) e^{2\pi i nt}
	\ll_\epsilon
	x^{1/2+\epsilon}.
\end{equation}
In view of Parseval's identity the exponent $1/2$ here cannot be
further improved.

By Theorem \ref{thm:main} and Corollary \ref{cor:main}
the estimates \eqref{eq:S-mu-bound} and
\eqref{eq:S-mu-bound-conj} translate into bounds for the H\"older
exponent and dimensions of the Fourier series $F(\mu;t)$ and
$F(\lambda;t)$:


\begin{Cor}
\label{cor:Moebius}
Assume the Generalized Riemann Hypothesis holds. Then we have:
\begin{itemize}
\item[(i)] The functions
$F(\mu;t)$ and $F(\lambda;t)$ 
are H\"older continuous with exponent $1/4-\epsilon$, for any
$\epsilon>0$, and the same holds  
for the absolute values and the real and imaginary parts of these
functions. 
\item[(ii)]
The upper box-counting dimensions of the graphs of the
functions $|F(\mu;t)|$, $\Re F(\mu;t)$, and $\Im F(\mu;t)$ 
over the interval $[0,1]$ are bounded by $\le 7/4$, and the same 
holds for the dimensions of the graphs associated with 
$F(\lambda;t)$. 
\end{itemize}
Moreover, under conjectures \eqref{eq:S-mu-bound-conj} 
the bounds $1/4-\epsilon$ and
$7/4$ in (i) and (ii) can be replaced by $1/2-\epsilon$ and
$3/2$, respectively.
\end{Cor}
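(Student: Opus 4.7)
The plan is a direct appeal to Theorem \ref{thm:main}. Both $\mu$ and $\lambda$ are arithmetic functions to which the theorem applies, and the Baker--Harman bounds \eqref{eq:S-mu-bound} (under GRH) and the conjectural bounds \eqref{eq:S-mu-bound-conj} are already exactly in the form \eqref{eq:S-bound} required as input. So the corollary should fall out by choosing the exponent $\alpha$ appropriately in each case.

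For the two main assertions I would fix a small $\epsilon>0$ with $3/4+\epsilon<1$ and apply Theorem \ref{thm:main} to $f=\mu$ and, independently, to $f=\lambda$, taking $\alpha=3/4+\epsilon$ in both cases. Part (i) of the theorem then yields the Hölder bound \eqref{eq:F-bound} with exponent $1-\alpha=1/4-\epsilon$ for $F(\mu;t)$, $F(\lambda;t)$, as well as for their absolute values and real and imaginary parts, which is precisely assertion (i) of the corollary. Part (ii) of the theorem gives the dimension estimate $\overline{\dim_B}\le 1+\alpha=7/4+\epsilon$ for the graphs of $|F(\mu;t)|$, $\Re F(\mu;t)$, $\Im F(\mu;t)$, and for the analogous graphs associated with $\lambda$. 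Since $\epsilon>0$ is arbitrary, letting $\epsilon\to 0^+$ sharpens the dimension bound to $7/4$, yielding (ii).

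For the final sentence of the corollary (the conjectural improvement) the same argument applies verbatim, but with $\alpha=1/2+\epsilon$ in place of $3/4+\epsilon$; this produces Hölder exponent $1/2-\epsilon$ and dimension bound $3/2+\epsilon$, the latter again simplifying to $3/2$ upon letting $\epsilon\to 0^+$.

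The argument is genuinely routine once Theorem \ref{thm:main} is in hand, and there is no serious obstacle. The only minor technicality is that Theorem \ref{thm:main} requires $0<\alpha<1$ strictly, so one must work with $\alpha=3/4+\epsilon$ (respectively $\alpha=1/2+\epsilon$) and let $\epsilon\to 0^+$ at the end, rather than applying the theorem with the boundary exponent directly; this is reflected in the $\epsilon$ appearing in the stated Hölder exponents but absent from the stated dimension bounds.
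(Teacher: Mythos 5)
Your proposal is correct and follows exactly the route the paper intends: the corollary is a direct application of Theorem \ref{thm:main} with $\alpha=3/4+\epsilon$ (under GRH, via \eqref{eq:S-mu-bound}) or $\alpha=1/2+\epsilon$ (under \eqref{eq:S-mu-bound-conj}), applied separately to $f=\mu$ and $f=\lambda$, with the dimension bounds sharpened to $7/4$ and $3/2$ by letting $\epsilon\to0^+$. The paper gives no further detail, so there is nothing to compare beyond this.
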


The assertion about the H\"older continuity of the M\"obius Fourier
series $F(\mu;t)$ is stated (though in a rather different form) in a
recent paper of Veech \cite{veech2018} (see Remark 5.4).  The other
assertions of the corollary do not seem to be in the literature.

Several authors considered the more general exponential sums 
\begin{equation}
\label{eq:Skmu-def}
	S_k(\mu;x,t)=\sum_{1\le n\le x} \mu(n)e^{2\pi i n^k t},
\end{equation}
where $k$ is a positive integer. In particular, assuming the Generalized
Riemann Hypothesis, 
Zhan and Liu \cite{liu-zhan1996} showed that if $k\ge2$, 
then for any fixed $\epsilon>0$ and uniformly in $t$, one has 
\begin{equation}
	\label{eq:Skmu-bound}
	S_k(\mu;x,t)\ll_\epsilon  x^{\alpha_k+\epsilon}
\end{equation}
with $\alpha_k=1-2^{1-2k}$. 
Via Theorem \ref{thm:main-generalized}
and Corollary \ref{cor:main-generalized}
this translates into 
the following bounds for the H\"older exponent and dimensions of the
Fourier series
\begin{align}
	\label{eq:Moebius-series2}
	F_{k,k}(\mu; t)&=\sum_{n=1}^\infty\frac{\mu(n)e^{2\pi i n^k
	t}}{n^k}.
\end{align}


\begin{Cor}
\label{cor:Moebius2}
Let $k$ be an integer with $k\ge 2$.
Assume that the Generalized Riemann Hypothesis holds. Then we have:
\begin{itemize}
\item[(i)] The function
$F_{k,k}(\mu;t)$
is H\"older continuous with exponent $1-(1-2^{1-2k})/k-\epsilon$  
for any $\epsilon>0$, and the same holds  
for the absolute values and the real and imaginary parts of 
$F_{k,k}(\mu;t)$.
\item[(ii)]
The upper box-counting dimensions of the graphs
of the functions 
$|F_{k,k}(\mu;t)|$,
$\Re F_{k,k}(\mu;t)$,
and 
$\Im F_{k,k}(\mu;t)$
over the interval $[0,1]$ are 
bounded by $\leq 1+(1-2^{1-2k})/k$.
\end{itemize}
\end{Cor}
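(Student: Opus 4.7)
The plan is to reduce Corollary \ref{cor:Moebius2} to a direct invocation of Theorem \ref{thm:main-generalized}, using the Liu--Zhan bound \eqref{eq:Skmu-bound} as the required exponential-sum estimate. Specifically, I would apply that theorem with $f=\mu$, $p=k$, and $\alpha=\alpha_k+\epsilon=1-2^{1-2k}+\epsilon$ for an arbitrary small $\epsilon>0$; under GRH, the bound \eqref{eq:Skmu-bound} is then exactly the hypothesis \eqref{eq:Sk-bound} of Theorem \ref{thm:main-generalized} for this choice of $\alpha$.

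Before invoking the theorem, I would verify the admissibility condition \eqref{eq:alpha-bound}. Since $p=k\ge 2$, one has $\max(0,p-k)=0$, so the requirement collapses to $0<\alpha<k$. Because $k\ge 2$ forces $0<1-2^{1-2k}<1$, taking $\epsilon$ small enough guarantees $0<\alpha<1\le k$, and the condition is satisfied.

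With the hypothesis in place, part (i) of Theorem \ref{thm:main-generalized} delivers H\"older continuity of $\Fkk(\mu;t)$ with exponent $(p-\alpha)/k=1-(1-2^{1-2k})/k-\epsilon/k$, together with the same exponent for $|\Fkk(\mu;t)|$, $\Re \Fkk(\mu;t)$, and $\Im \Fkk(\mu;t)$. Since $\epsilon>0$ was arbitrary, absorbing the factor $\epsilon/k$ into a fresh $\epsilon$ gives the stated H\"older exponent $1-(1-2^{1-2k})/k-\epsilon$, which establishes part (i).

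Part (ii) then follows immediately from part (ii) of Theorem \ref{thm:main-generalized}, which yields the dimension bound $2-(p-\alpha)/k=1+(1-2^{1-2k})/k+\epsilon/k$ for every $\epsilon>0$; letting $\epsilon\to 0$ gives the asserted bound $1+(1-2^{1-2k})/k$. The argument is essentially a bookkeeping exercise chaining the two inputs (the Liu--Zhan estimate and Theorem \ref{thm:main-generalized}), and I do not see a substantive obstacle; the only point requiring slight care is keeping track of the $\epsilon$-loss coming from \eqref{eq:Skmu-bound} so that, after being divided by $k$, it can be re-absorbed into the arbitrary $\epsilon$ of the final statement.
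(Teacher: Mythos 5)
Your proposal is correct and follows exactly the route the paper intends: the corollary is stated as an immediate consequence of Theorem \ref{thm:main-generalized} applied with $p=k$ and $\alpha=1-2^{1-2k}+\epsilon$ from the Liu--Zhan bound \eqref{eq:Skmu-bound}, and your verification of the admissibility condition \eqref{eq:alpha-bound} and the bookkeeping of the $\epsilon$-loss (including taking the infimum over $\epsilon$ in part (ii)) supply precisely the details the paper leaves implicit.
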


\begin{Remarks}
\mbox{}
\begin{enumerate}
\item
Similar  bounds on H\"older exponents and dimensions 
can be obtained from Theorem \ref{thm:main-generalized}
for the more general functions $\Fkp(\mu;t)=\sum_{n=1}^\infty
\mu(n)e^{2\pi i n^kt}/n^p$.  For example, if $p$ satisfies
$1<p\le k$, then, under the assumption of the Generalized Riemann
Hypothesis, $\Fkp(\mu;t)$ 
is H\"older continuous with exponent $(p-1+2^{1-2k})/k$.   
\item 
Since the function $\mu(n)$ is bounded by $1$, we have the trivial bound
\begin{equation}
	\label{eq:Skmu-bound-trivial}
	|S_k(\mu;x,t)|\le x\quad (x\ge1,\ t\in\R).
\end{equation}
Applying Theorem \ref{thm:main-generalized} with this bound
instead of \eqref{eq:Skmu-bound} yields, for $k\ge2$,
the conclusions of the corollary with H\"older exponent $1-1/k$ in place of
$1-(1-2^{1-2k})/k-\epsilon$ and dimension bound 
$1+1/k$ in place of $1+(1-2^{1-2k})/k$. These bounds can be regarded 
as ``baseline'' bounds for these quantities that  do not depend on the
oscillating nature of the M\"obius function.  

\item
In the other direction, 
the most optimistic estimate for $S_k(\mu;x,t)$ one can hope for is a 
squareroot bound analogous to \eqref{eq:S-mu-bound-conj}, i.e., 
a bound of the form
\begin{equation}
	\label{eq:Sk-mu-bound-conj}
	\sum_{1\le n\le x} \mu(n) e^{2\pi i n^kt}\ll_\epsilon  x^{1/2+\epsilon},
\end{equation}
for any fixed $\epsilon>0$ and uniformly in $t$. Under this assumption,  
Theorem \ref{thm:main-generalized} yields the assertions of the
corollary with H\"older exponent $1-1/(2k)-\epsilon$ and dimension bound 
$1+1/(2k)$. 
\end{enumerate}
\end{Remarks}

Table \ref{table:moebius-series} summarizes the H\"older exponents for
the series $F_{k,k}(\mu;t)$, $k=1,\dots,4$, obtained from the above
corollaries and the remarks following Corollary \ref{cor:Moebius2}.

\begin{table}[H]
\begin{center}
\renewcommand{\arraystretch}{1.8}
\begin{tabular}{|c||c|c|c|c|}
\hline
$k$& $1$ & $2$ &  $3$ & $4$
\\
\hline\hline
Unconditional 
& --- & $\frac12=0.5000$ & $\frac{2}{3}=0.6666\dots$ &
$\frac{3}{4}=0.7500$
\\
\hline
Assuming GRH
& $\frac{1}{4}=0.2500$  & $\frac{9}{16}=0.5625$ 
& $\frac{65}{96}=0.6770\dots$ & $\frac{385}{512}=0.7519\dots$
\\
\hline
Assuming \eqref{eq:Sk-mu-bound-conj} 
& $\frac{1}{2}=0.5000$ & $\frac34=0.7500$ 
& $\frac{5}{6}=0.8333\dots$ & $\frac{7}{8}=0.8750$
\\
\hline
\end{tabular}
\end{center}
\caption{H\"older exponents for the generalized M\"obius
series $F_{k,k}(\mu;t)=\sum_{n=1}^\infty \mu(n)e^{2\pi i n^k t}/n^k$,
under various assumptions. The exponents corresponding to the
values in the last two rows
are understood to be of the form $\eta-\epsilon$,
where $\eta$ is the value given in the table and $\epsilon$ is an
arbitrarily small positive number.}
\label{table:moebius-series}
\end{table}

Figures \ref{fig:moebius1}--\ref{fig:moebius44} below 
show how the fractal nature of the M\"obius series $F_{k,k}(\mu;t)$
becomes less and less prominent as $k$ gets larger.
This is consistent with the dimension bounds in part (ii) of Corollary
\ref{cor:Moebius2}, which approach $1$ as $k\to\infty$ .


\begin{figure}[H]
	\begin{subfigure}{.5\textwidth}
		\centering
		\includegraphics[width=.9\linewidth]{PlotMob}
		\caption{Path $F(\mu;t)$, $0\le t\le 1$}
		\label{fig:sfig1}
	\end{subfigure}%
	\begin{subfigure}{.5\textwidth}
		\centering
		\includegraphics[width=.9\linewidth]{GraphMob}
		\caption{Graph of $|F(\mu;t)|$, $0\le t\le 1$}
		\label{fig:sfig2}
	\end{subfigure}
	\caption{The Fourier series $F(\mu;t)=\sum_{n=1}^\infty
	\mu(n)e^{2 \pi i n t}/n$.}
	\label{fig:moebius1}
\end{figure}

\begin{figure}[H]
	\begin{subfigure}{.5\textwidth}
		\centering
		\includegraphics[width=.9\linewidth]{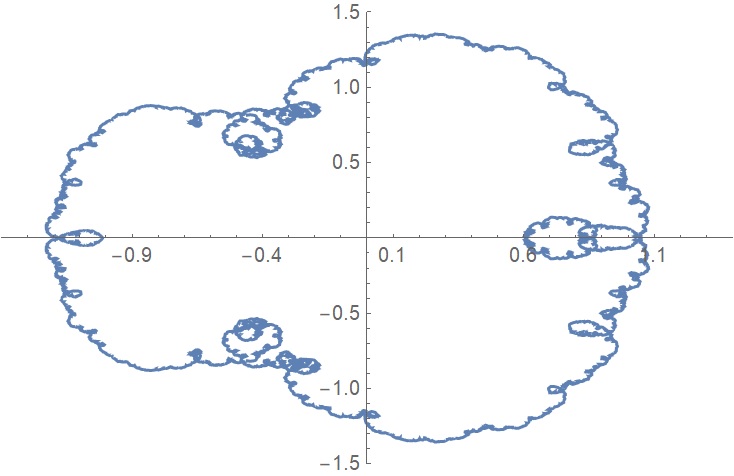}
		\caption{Path $F_{2,2}(\mu;t)$, $0\le t\le 1$}
	\end{subfigure}%
	\begin{subfigure}{.5\textwidth}
		\centering
		\includegraphics[width=.9\linewidth]{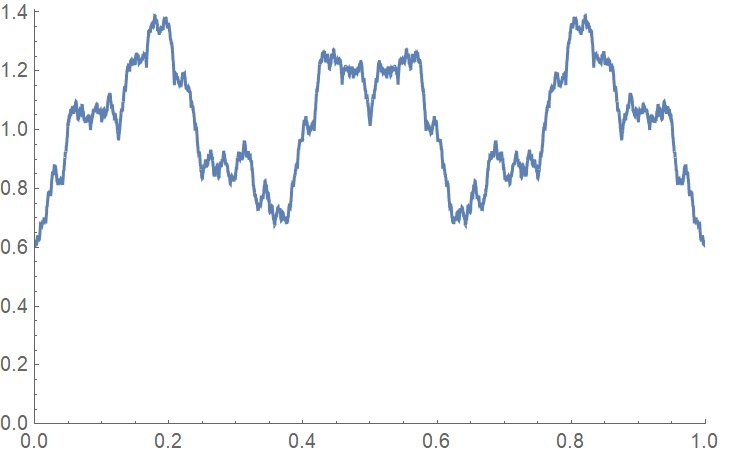}
		\caption{Graph of $|F_{2,2}(\mu;t)|$, $0\le t\le 1$}
	\end{subfigure}
	\caption{The Fourier series $F_{2,2}(\mu;t)=\sum_{n=1}^\infty
	\mu(n)e^{2 \pi i n^2 t}/n^2$.}
	\label{fig:moebius22}
\end{figure}

\begin{figure}[H]
	\begin{subfigure}{.5\textwidth}
		\centering
		\includegraphics[width=.9\linewidth]{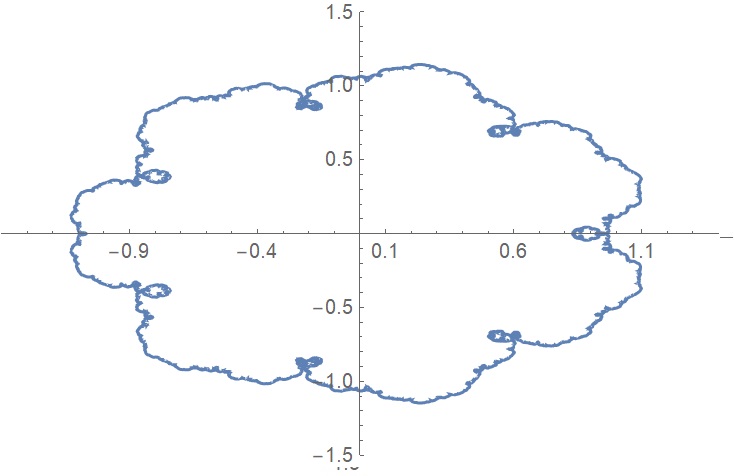}
		\caption{Path $F_{3,3}(\mu;t)$, $0\le t\le 1$}
	\end{subfigure}%
	\begin{subfigure}{.5\textwidth}
		\centering
		\includegraphics[width=.9\linewidth]{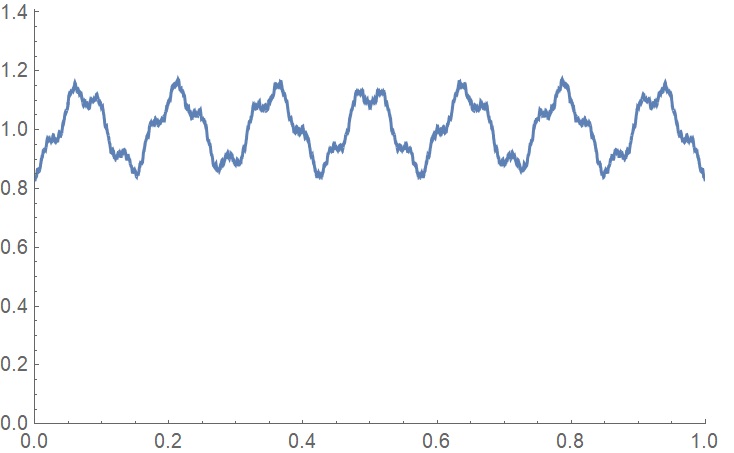}
		\caption{Graph of $|F_{3,3}(\mu;t)|$, $0\le t\le 1$}
	\end{subfigure}
	\caption{The Fourier series $F_{3,3}(\mu;t)=\sum_{n=1}^\infty
	\mu(n)e^{2 \pi i n^3 t}/n^3$.}
	\label{fig:moebius33}
\end{figure}
\begin{figure}[H]
	\begin{subfigure}{.5\textwidth}
		\centering
		\includegraphics[width=.9\linewidth]{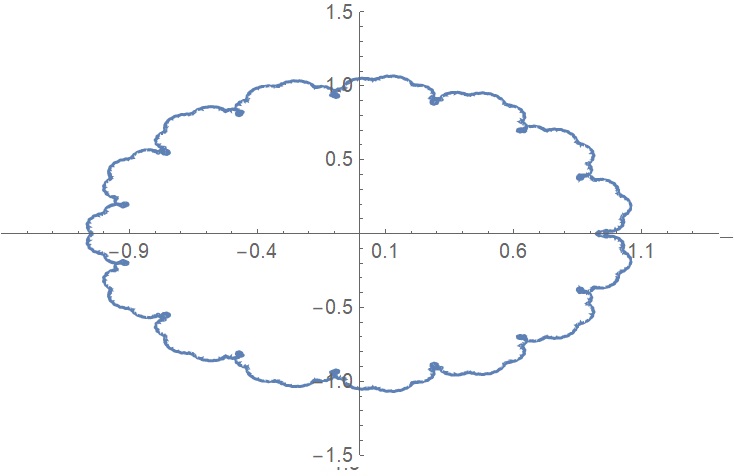}
		\caption{Path $F_{4,4}(\mu;t)$, $0\le t\le 1$}
	\end{subfigure}%
	\begin{subfigure}{.5\textwidth}
		\centering
		\includegraphics[width=.9\linewidth]{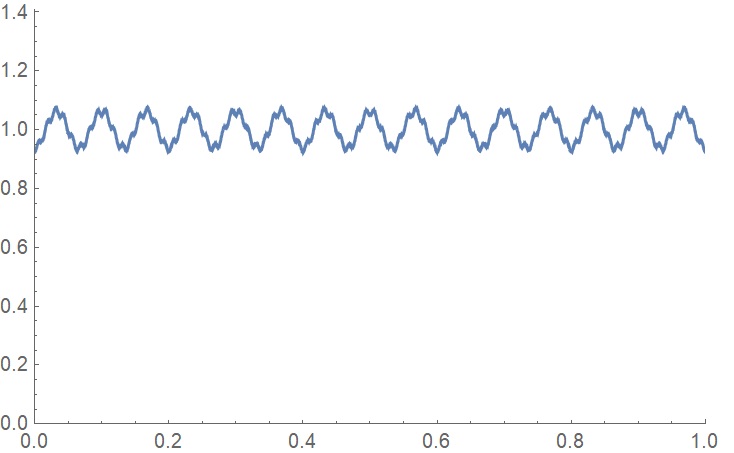}
		\caption{Graph of $|F_{4,4}(\mu;t)|$, $0\le t\le 1$}
	\end{subfigure}
	\caption{The Fourier series $F_{4,4}(\mu;t)=\sum_{n=1}^\infty
	\mu(n)e^{2 \pi i n^4 t}/n^4$.}
	\label{fig:moebius44}
\end{figure}


\section{Concluding Remarks}
\label{sec:conclusion}

We conclude this paper by discussing some possible extensions of our
results and some related questions and open problems.

\subsection{Localized versions of Theorems \ref{thm:main} and
\ref{thm:main-generalized}.}
A key aspect of Theorems \ref{thm:main} and \ref{thm:main-generalized}
is the uniformity in $t$ of both the bound \eqref{eq:S-bound} for
$S(x,t)$ and the bound \eqref{eq:F-bound} for $F(t+h)-F(t)$.  This
raises the question whether these theorems can be localized, in the sense
that assuming a bound for the exponential sums $S(x,t)$ at a
\emph{particular}
point $t$ yields a bound for the \emph{local} (or \emph{pointwise}) 
H\"older exponent   of
the associated Fourier series at that point (see Definition
\ref{def:holder}). In other words, if we have a bound for $S(x,t)$ of the form
\eqref{eq:S-bound}, but with the constant $C=C(t)$ and the exponent
$\alpha=\alpha(t)$ being allowed to depend on $t$, can we obtain
non-trivial bounds for the pointwise H\"older exponents $\eta(t)$ of
the function $F(t)$?  

A result of this type, if true, would have significant applications.
For example, in the case of the M\"obius function $\mu(n)$, Murty and
Sankaranarayanan \cite{murty-sanka2002} proved \emph{unconditionally}
(i.e., without assuming the Generalized Riemann Hypothesis)  a bound of
the form $S(\mu;x,t)\ll_\epsilon x^{4/5+\epsilon}$ for a certain class of
numbers $t$ that includes all algebraic irrational numbers.  If a
localized version of Theorem \ref{thm:main} were available, such a bound
would imply non-trivial bounds on the local H\"older exponent of the
M\"obius Fourier series $F(\mu;t)$ for the same class of numbers $t$.

Our method relies on the uniformity of the exponential sum bound
\eqref{eq:S-bound} in an essential manner and does not seem to be
capable of yielding a localized version of the type mentioned. 
The most we can prove in this direction
 is that an exponential
sum bound that is uniform \emph{in some open neighborhood of $t$}
implies a corresponding bound for the H\"older exponent in that same
neighborhood. This, however, would not be sufficient for the application
to the M\"obius Fourier series mentioned above. 

The question whether there is a  local analog of Theorem
\ref{thm:main} that relates estimates for $S(x,t)$ at a
\emph{particular} point $t$ to estimates for the \emph{local} or
\emph{pointwise} H\"older exponent of $F(t)$ at the same point $t$
remains open.  We expect, however, that such a result, if true, would
not hold in nearly the same generality as Theorem \ref{thm:main}.
Some interesting partial results in this direction have recently  
been obtained by Chamizo, Petrykiewicz, and Ruiz-Cabelo
\cite{chamizo-petrykiewicz-ruiz2017}.

\subsection{Lower bounds on the H\"older exponent and bi-H\"older
continuity.}\label{subsec:bi-Holder}
Theorem \ref{thm:main} yields, under the assumption \eqref{eq:S-bound},
a bound of the form 
\begin{equation}
\label{eq:F-upper-holder}
|F(t+h)-F(t)|\le C_1 h^\eta
\end{equation}
uniformly in $t,h\in(0,1)$. A natural question is whether a similar
bound in the other direction holds, i.e., whether one has, with suitable
constants $C_1'>0$ and $\eta'>0$, 
\begin{equation}
\label{eq:F-lower-holder}
|F(t+h)-F(t)|\ge C_1' h^{\eta'}
\end{equation}
uniformly in $t,h\in(0,1)$.  A function satisfying both 
\eqref{eq:F-upper-holder}
and \eqref{eq:F-lower-holder} is called bi-H\"older continuous.

Lower bounds of the form \eqref{eq:F-lower-holder} (or localized
versions of such bounds) have been established for certain narrowly
defined  classes of functions such as the Weierstrass function 
and functions of Riemann type.
However, for more general classes of functions such as
the functions considered in Theorem \ref{thm:main} very little is known.
In particular, whether the M\"obius Fourier series $F(\mu;t)$ satisfies 
a non-trivial lower bound of the form \eqref{eq:F-lower-holder} is 
not known.

\subsection{The constants in Theorems \ref{thm:main} and
\ref{thm:main-generalized}.}
As we have noted, the exponent $1-\alpha$ in the H\"older estimate
\eqref{eq:F-bound} of Theorem \ref{thm:main} is optimal. This
raises the question to what extent the constant $C_1=C_1(C,\alpha)$ 
in this theorem is also optimal. A small improvement can be obtained by
keeping the factor $(2\pi + 4\pi/\alpha)$ in \eqref{eq:lem2c} instead of
replacing it by the larger value $6\pi/\alpha$. This results in the 
value 
\[
C_1(C,\alpha)=
\frac{8\alpha+2\pi(\alpha+2)(1-\alpha)}{\alpha(1-\alpha)}C
\]
for the constant in Theorem \ref{thm:main}, 
which is slightly smaller than the value \eqref{eq:C1-formula}
given in that theorem though has similar asymptotic behavior as 
$\alpha\to 0+$ or $\alpha\to 1-$.  Is this constant best-possible, at
least as far as its dependence on $C$ and $\alpha$ is concerned?

Since scaling the coefficients $f(n)$ by a fixed constant scales the
associated exponential sum $S(x,t)$ and Fourier series $F(t)$ by the
same constant, it is obvious that $C_1$ must depend linearly on the
constant $C$ in \eqref{eq:S-bound}.  On the other hand, the dependence
on $\alpha$ is less clear.  In particular, one can ask whether, as
$\alpha$ approaches $0$ or $1$, the constant $C_1$ is \emph{necessarily}
unbounded.  With our approach this seems unavoidable: Lemmas
\ref{lem:cauchy-estimate} and \ref{lem:FN-diff} introduce factors
proportional to $1/(1-\alpha)$ and $1/\alpha$, respectively, which in
turn forces the constant $C_1$ in Theorem \ref{thm:main} to depend on
$\alpha$ at a rate roughly proportional to $1/(\alpha(1-\alpha))$.

Similar questions can be asked about the constant $C_2$ in Theorem
\ref{thm:main-generalized} and its dependence on the parameters $k$,
$p$, $C$, and $\alpha$.

\subsection{Lower dimension bounds.}
Theorem \ref{thm:main} provides \emph{upper} bounds on the 
box-counting dimensions of the path 
$F([0,1])$ and the graph $G(F)$.
In the case of the Weierstrass function, these bounds
are known to be sharp, i.e., they also represent lower bounds on the
dimension.  One can ask whether non-trivial lower bounds on the
dimensions of $F([0,1])$ and $G(F)$ can be obtained in the setting 
of Theorem \ref{thm:main}. More precisely, what non-trivial conditions can be assumed on $f$ or $S(f;x,t)$ in order to deduce a corresponding 
non-trivial lower bound on the dimensions of $F([0,1])$ and $G(F)$? Note the connection of this question to the discussion in Section \ref{subsec:bi-Holder} in view of $(3.11)$ in \cite{fraser2021} and Corollary 11.2 in \cite{falconer2014}, which imply lower dimension bounds for $F([0,1])$ and $G(F)$, respectively, if a bi-H\"older condition holds for $F$.

\subsection{Bounds on Hausdorff and Assouad dimensions.}
Corollaries \ref{cor:main} and \ref{cor:main-generalized} give bounds on
the \emph{box-counting} dimension, $\dim_B$, of graphs associated with
the functions $F(t)$ and $\Fkp(t)$. Other notions of dimension
that have been studied in the literature include the \emph{Hausdorff
dimension}, denoted by $\dim_H$, and the \emph{Assouad dimension}, 
denoted by $\dim_A$; see \cite[p.~6]{fraser2021} and
\cite[p.~10]{fraser2021} for precise definitions of these concepts.  

It is known (see \cite[Lemma 2.4.3]{fraser2021})
that, for any bounded set $E\subset \R^d$, 
\begin{equation}
\label{eq:dimineq1}
	\dim_H E \leq \underline{\dim_B} E 
	\leq \overline{\dim_B}E
	\leq \dim_AE.
\end{equation}
Thus any upper bound on the box-counting dimension is also an upper
bound on the Hausdorff dimension.  It follows that the upper bounds for
the box-counting dimension provided by Corollaries \ref{cor:main} and
\ref{cor:main-generalized} also hold for the Hausdorff dimension.

On the other hand, \eqref{eq:dimineq1} also shows that the Assouad dimension is
bounded \emph{below} by the (upper) box-counting dimension, so 
no similar conclusion can be drawn for the Assouad dimension. 
Moreover, there is no analog of the
inequalities of Proposition \ref{prop:dimdist} for the Assouad dimension.
That is, there is no non-trivial bound for the Assouad dimension 
of the path $F([0,1])$ and the graph $G(F)$
in terms of the H\"older exponent of the
function $F(t)$.  It would be interesting to know if one can obtain non-trivial
bounds on the Assouad dimension under assumptions similar to those of
Corollary \ref{cor:main}. 
Even in the special case of the Weierstrass function,
determining the exact value of the Assouad dimension of its graph is
still an open problem (see \cite{fraser2021}, Question 17.11.1).

\subsection{Random Fourier series.} A natural model for Fourier series
with pseudo-random coefficients such as the M\"obius and Liouville
functions $\mu(n)$ and $\lambda(n)$ is a random series of the form
\begin{equation}
\label{eq:FX-def}
F(X;t)=\sum_{n=1}^\infty \frac{X_n e^{2\pi i nt}}{n},
\end{equation}
with coefficients given by a sequence of independent random variables
$X_n$ that take on values $+1$ and $-1$ with probability $1/2$ each.
The behavior (with probability $1$) of such random series can serve as a
source for conjectures for deterministic series such as those associated
with the M\"obius and Liouville functions.

Random Fourier series of this type have been investigated in the
literature.  In particular, a result of Kahane (see Theorem 3 in Chapter
7 of \cite{kahane1985}) shows that, with probability $1$, the series
\eqref{eq:FX-def} is H\"older continuous with exponent $1/2$, and that
this exponent is best-possible.  Thus it seems reasonable to conjecture 
that the M\"obius series $F(\mu;t)$ is H\"older continuous 
with exponent  $1/2-\epsilon$, for any fixed $\epsilon$,
and that the constant $1/2$ here is best possible.  

Recently, Kowalski and Sawin \cite{kowalski-sawin2016} considered series
of the form \eqref{eq:FX-def}, where the $X_n$ are independent random
variables having the \emph{Sato-Tate distribution}. They showed that
this particular random series arises naturally in number theory as a
limiting   process of random functions associated with Kloosterman sums,
and they established a variety of properties of this series. 






\providecommand{\bysame}{\leavevmode\hbox to3em{\hrulefill}\thinspace}
\providecommand{\MR}{\relax\ifhmode\unskip\space\fi MR }

\end{document}